\theoremstyle{plain}
\newtheorem{theorem}{Theorem}[section]
\newtheorem*{theorem*}{Theorem}
\newtheorem*{conjecture*}{Conjecture}
\newtheorem*{corollary*}{Corollary}
\newtheorem{corollary}[theorem]{Corollary}
\newtheorem*{claim}{Claim}
\theoremstyle{definition}
\newtheorem*{problem*}{Problem}
\newtheorem{problem}{Problem}
\theoremstyle{remark}
\newcommand{\N}{\mathbb{N}}
\begin{document}

\newcommand{\pf}{{\bf Proof: }}

\begin{raggedleft}

\advance\rightskip by-1cm
\null\vskip 1cm
{\LARGE\bf\center Arc-Disjoint Paths and Trees in 2-Regular Digraphs\par}
\vskip 0.5cm
{\LARGE\center J\o{}rgen Bang-Jensen$^\text{a}$ and Sven Simonsen$^\text{a}$\par}
{\center $^\text{a}$ Department of Mathematics and Computer Science, University of Southern Denmark, Odense DK-5230, Denmark (email:jbj@imada.sdu.dk,svsim@imada.sdu.dk).\par}
\vskip \lineskip
{\Large\center \today \par}

\end{raggedleft}

\vskip 1.5cm
{\large\bf\center Abstract \par}

An out-(in-)branching $B_s^+ (B_s^-)$ rooted at $s$ in a digraph $D$ is a connected spanning subdigraph of $D$ in which every vertex $x\not =s$ has precisely one arc entering (leaving) it and $s$ has no arcs entering (leaving) it. We settle the complexity of the following two problems: 

\begin{itemize}

\item Given a 2-regular digraph $D$, decide if it contains two arc-disjoint branchings $B^+_u$, $B^-_v$. 

\item Given a 2-regular digraph $D$, decide if it contains an out-branching $B^+_u$ such that $D$ remains connected after removing the arcs of $B^+_u$.

\end{itemize}

Both problems are NP-complete for general digraphs \cite{bangJCT51,bangTCSta}. We prove that the first problem remains  NP-complete for  2-regular digraphs, whereas the second problem turns out to be polynomial when we do not prescribe the root in advance. We also prove that, for 2-regular digraphs, the latter problem is in fact equivalent to deciding if $D$ contains two arc-disjoint out-branchings. We generalize this result to k-regular digraphs where we want to find a number of pairwise arc-disjoint spanning trees and out-branchings such that there are k in total, again without prescribing any roots.

{\bf Keywords:} Spanning tree, Out-branching, Mixed problem, Polynomial time, NP-Complete problem, 2-regular digraph.

\section{Introduction}

Every digraph will be finite. Notation will follow \cite{Bang} unless stated otherwise. We recall the most relevant concepts below:

For a given digraph $D=(V,A)$ the  {\bf in-degree} $d^-(X)$ ({\bf out-degree} $d^+(X)$) of the vertex set $X\subset V$ is the number of arcs entering (leaving) $X$. A digraph is {\bf $k$-arc-strong} if $d^+(X)\geq k$ for every non-empty proper subset $X$ of $V$.

When we \textbf{split} a vertex $v\in V$ into it's ingoing part $v^-$ and outgoing part $v^+$ we replace $v$ by  two new vertices $v^-$ and $v^+$ and replace every arc  $uv\in A$ ($vw\in A$) by the arc $uv^-$ ($v^+w$).

A digraph $D=(V,A)$ is \textbf{k-regular} if every vertex $v\in V$ has out-degree $d^+(v)=k$ and in-degree $d^-(v)=k$.

A \textbf{Hamiltonian path (cycle)} in $D$ is a directed path (cycle) that contains all vertices in $V$.

An \textbf{out-(in-)branching} $B_s^+ (B_s^-)$ rooted at the vertex $s$ in $D$ is a connected spanning subdigraph of $D$ in which every vertex $x\not =s$ has precisely one arc entering (leaving) it and $s$ has no arcs entering (leaving) it.

A \textbf{spanning tree} in $D$ is a spanning tree in the underlying graph $UG(D)$, that is the graph that appears when we disregard the direction of all arcs in $D$, turning them into edges. Note that $UG(D)$ may contain parallel edges. We say that a digraph $D$ is {\bf connected} when $UG(D)$ is a connected graph. 

Given a digraph $D=(V,A)$, a vector $r: V\rightarrow \N_0$, that maps vertices to integers, is called a \textbf{root vector} of $D$ if there exists a set of arc-disjoint out-branchings in $D=(V,A)$ such that each vertex $v\in V$ is the root of exactly $r(v)$ of these out-branchings. We extend  $r$ to subsets of $V$ by letting $r(X)=\sum_{v\in X}{r(v)}$ for all $X\subseteq V$. Note that $r(V)$ then denotes the total number of roots of out-branchings  prescribed by $r$, assuming $r$ is a root vector.

Graphs that contain k edge-disjoint spanning trees were  characterized by Tutte

\begin{theorem}[Tutte's Tree Packing Theorem] \cite{Tutte}\\ \label{tut}
A graph $G=(V,E)$ contains $k$ edge-disjoint spanning trees if and only if
\begin{equation*} e_{\cal F}\geq k(t-1) \end{equation*}
holds for every partition ${\cal F}=V_1,V_2,\ldots,V_t$ of $V$, where $e_{\cal F}$ denotes the number of edges connecting different sets $V_i,V_j$.
\end{theorem}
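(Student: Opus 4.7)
The plan is to establish both directions of the equivalence. For necessity, suppose $G$ contains $k$ edge-disjoint spanning trees $T_1,\ldots,T_k$ and let $\F=V_1,\ldots,V_t$ be any partition of $V$. Contracting each $V_i$ to a single vertex produces a multigraph on $t$ vertices whose edge multiset is exactly the $e_\F$ crossing edges of $\F$. Each $T_j$ remains spanning and connected under contraction, and so contributes at least $t-1$ crossing edges; summing over $j$ yields $e_\F\geq k(t-1)$.

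For sufficiency I would argue by contradiction via an exchange argument. Take a collection $F_1,\ldots,F_k$ of pairwise edge-disjoint forests in $G$ maximizing $\sum_i |E(F_i)|$; if every $F_i$ is a spanning tree we are done, so suppose $F_{i_0}$ has two distinct components $C,C'$. Build an auxiliary digraph on $E(G)$ whose arcs record legal single swaps: from $f\in F_j$ to $g$ whenever placing $f$ into the forest currently containing $g$ (or, at the start of the search, into $F_{i_0}$) and removing $g$ from that forest keeps every forest acyclic. Starting from the edges with one endpoint in $C$ and one in $C'$, search in this digraph for a path ending at an edge outside $\bigcup_j F_j$.

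If such a path exists, executing the corresponding chain of swaps strictly increases $\sum_i |E(F_i)|$, contradicting maximality. If no such path exists, then the set $R$ of reachable edges is ``closed'' under the swap relation, and intersecting the component-partitions of the forests restricted to $R$ yields a partition $\F=V_1,\ldots,V_t$ of $V$ for which each $F_j$ with $j\neq i_0$ contributes exactly $t-1$ crossing edges while $F_{i_0}$ contributes only $t-2$, giving $e_\F\leq k(t-1)-1$ and contradicting the hypothesis.

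The main obstacle will be the careful bookkeeping in the sufficiency argument: defining the auxiliary digraph precisely, propagating swaps consistently along a chain so that every intermediate configuration remains a valid family of edge-disjoint forests, and extracting an explicit violating partition from the reachability structure when no augmenting path is available. A cleaner alternative route is to invoke Edmonds' matroid union theorem applied to $k$ copies of the graphic matroid of $G$, whose rank is given by a minimum over partitions of $V$ and yields Tutte's inequality directly.
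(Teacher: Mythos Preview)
The paper does not prove this theorem at all: it is stated with a citation to Tutte and used as a black box, so there is no ``paper's own proof'' to compare against. What you have written is an outline of one of the standard proofs of Tutte's theorem, not something the paper supplies.

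On the content of your sketch: the necessity direction is correct and complete as stated. For sufficiency, your overall strategy (maximal edge-disjoint forests, augmenting chains of exchanges, and extracting a violating partition when no augmentation is possible) is the classical Nash-Williams/Tutte approach, and the matroid-union alternative you mention is also a legitimate and clean route. However, the specific description of your auxiliary digraph is not quite right as written: the phrase ``placing $f$ into the forest currently containing $g$'' does not parse, since $g$ may lie in several forests or none, and the swap relation should be defined in terms of the unique cycle created when an edge is inserted into a given $F_j$. Likewise, the claim that in the non-augmenting case each $F_j$ with $j\neq i_0$ contributes \emph{exactly} $t-1$ crossing edges while $F_{i_0}$ contributes $t-2$ would need justification; what one actually shows is that every crossing edge of the extracted partition lies in $\bigcup_j F_j$ and that each $F_j$ restricted to crossing edges is a spanning tree of the contracted graph except for $F_{i_0}$, which falls one short. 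These are exactly the bookkeeping points you flag as the main obstacle, so you have correctly identified where the work lies, but the current wording would not survive as a proof without being rewritten.
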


A maximum collection of edge-disjoint spanning trees can  be found in polynomial time by converting the problem to a matroid problem and then applying Edmonds' Matroid Partition Algorithm as explained in \cite{Edmonds2}.

Digraphs that contain k arc-disjoint out-branchings have been characterized by Edmonds' Branching Theorem \cite{Edmonds} from which one easily gets the following characterization of root-vectors:

\begin{theorem} (T 2.12 in \cite{Berczi})\\ \label{root}
Let $D=(V,A)$ be a directed graph and $r:V\rightarrow \N_0$ a vector with $r(V)=k$.

Then $r$ is a root vector if and only if
\begin{equation*} \label{eq:root} d_D^-(X)\geq k-r(X) \quad \text{for all non-empty } X\subseteq V. \end{equation*}
\end{theorem}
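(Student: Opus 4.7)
The plan is to deduce Theorem \ref{root} from Edmonds' Branching Theorem via the standard super-root reduction. Construct an auxiliary digraph $D' = (V \cup \{s\}, A')$ by adding a new vertex $s$ together with, for each $v \in V$, exactly $r(v)$ parallel arcs $sv$. Then $s$ has out-degree exactly $r(V) = k$ in $D'$, and no arcs enter $s$.

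First I would set up a bijection between the two objects of interest. Any collection of $k$ arc-disjoint out-branchings of $D$ in which each $v$ is the root of exactly $r(v)$ of them extends to $k$ arc-disjoint out-branchings of $D'$ rooted at $s$: simply prepend to each branching the appropriate unused arc from $s$ to its root. Conversely, given $k$ arc-disjoint out-branchings of $D'$ rooted at $s$, each must use at least one arc leaving $s$, so by arc-disjointness and $d^+_{D'}(s)=k$ each uses exactly one such arc; deleting $s$ from each branching therefore yields $k$ arc-disjoint out-branchings in $D$ whose multiset of roots assigns exactly $r(v)$ branchings to each vertex $v$. Hence $r$ is a root vector of $D$ if and only if $D'$ admits $k$ arc-disjoint out-branchings rooted at $s$.

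Next I would apply Edmonds' Branching Theorem to $D'$: the latter condition holds if and only if $d_{D'}^-(X) \geq k$ for every non-empty $X \subseteq V$ (note $s \notin X$, since $d^-_{D'}(s)=0$ would force $X=V\cup\{s\}$ trivially). For any such $X$, the arcs entering $X$ in $D'$ split into arcs of $D$ entering $X$, contributing $d_D^-(X)$, and arcs from $s$ into $X$, contributing exactly $r(X)$ by construction. Thus $d_{D'}^-(X) = d_D^-(X) + r(X)$, and the Edmonds condition rearranges to $d_D^-(X) \geq k - r(X)$, which is the stated inequality.

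There is essentially no obstacle here; the only detail worth double-checking is that every out-branching of $D'$ rooted at $s$ uses exactly one arc out of $s$, which is immediate from $d^+_{D'}(s) = r(V) = k$ combined with arc-disjointness. The proof therefore reduces entirely to the super-root trick plus a routine bookkeeping of how $r$ interacts with in-degrees of subsets of $V$ in $D'$.
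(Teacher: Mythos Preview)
The paper does not actually prove Theorem~\ref{root}; it is stated there as a known consequence of Edmonds' Branching Theorem and cited from \cite{Berczi}. Your super-root reduction is correct and is precisely the standard way to derive this characterization from Edmonds' theorem, so there is nothing to compare against. The one place worth tightening is the parenthetical about $s\notin X$: simply say that Edmonds' condition is required for all non-empty $X\subseteq V(D')\setminus\{s\}=V$, rather than invoking $d^-_{D'}(s)=0$, which as phrased reads a little awkwardly.
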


Edmonds \cite{Edmonds} gave a polynomial algorithm for finding these k arc-disjoint out-branchings given the roots specified by the root vector, while Frank \cite{Frank} gave an algorithm for finding the maximum number of arc-disjoint out-branchings when we do not fix the roots.

In this paper we will study two related problems, both of which are NP-complete for general digraphs, and examine them on the restricted class of 2-regular digraphs.

\begin{problem}\label{ioB}
Given a digraph $D$ and vertices $u,v$ (not necessarily distinct). Decide whether $D$ has a pair of arc-disjoint branchings $B^+_u$, $B^-_v$.
\end{problem}

Thomassen \cite{Thomassen} conjectured in 1985 that for large enough integers $r$ every $r$-arc-strong digraph contains arc-disjoint branchings $B^+_v$, $B^-_v$ for every vertex $v$. This conjecture is wide open and it is only known that $r\geq 3$ must hold. 

Thomassen also proved (see \cite{bangJCT51})  that Problem \ref{ioB} is NP-complete for general digraphs. Thus it is of interest to study Problem \ref{ioB} in special classes of digraphs. The first author proved that Problem \ref{ioB} is polynomial for Tournaments \cite{bangJCT51}. We prove below that Problem  \ref{ioB} is NP-complete for 2-regular digraphs. 

Inspired by the existence of good characterizations and algorithms (mentioned above) for respectively, the existence of $k$ edge-disjoint spanning trees in a graph and the existence of $k$ arc-disjoint out-branchings (with or without specified roots) in a digraph, Thomass\'e posed the following problem around 2006 (it appeared on the Hungarian problem page Egres open for several years). 

\begin{problem} \label{oBsT}
Given a digraph $D$, decide whether it contains an out-branching $B^+_u$ such that $D$ remains connected after removing the arcs of $B^+_u$.
\end{problem}

The first author and Yeo proved recently that Problem \ref{oBsT} is NP-complete for general digraphs \cite{bangTCSta}. In the meantime the idea of studying mixed problems where we are asking for (arc)-disjoint structures $S,T$ in a digraph $D$  where only $S$  has to respect the orientation of arcs in $D$, gave inspiration for several papers, see e.g.  \cite{bangTCS410,bangC31}.

We prove that  when we do not specify the root $u$ in Problem \ref{oBsT} and $D$ is 2-regular, then the problem  becomes equivalent to deciding if the digraph contains 2 arc-disjoint out-branchings, hence making it polynomial. This  contrasts the previous result that Problem \ref{ioB} remained NP-complete even in the class of 2-regular digraphs. The complexity of the remaining case where we do specify the root is still open. In Section \ref{related} we prove that a number of (seemingly) closely related problems are NP-complete even for 2-regular digraphs .

Packing two spanning subdigraphs in a 2-regular digraph will always require all but at most two of the arcs. So at first hand it seems that this restriction should make the problem tractable but as we mentioned above (and will prove in Sections \ref{ioBsec}, \ref{BTsec}) this is only the case for Problem \ref{oBsT}. In fact, under the assumption that P$\neq$NP, the following is a consequence of Problem \ref{ioB} being NP-complete for 2-regular digraphs (see Theorem \ref{NPinout}).

\begin{corollary} \label{useall}
It is NP-complete to decide for a given digraph $D$ on $n$ vertices and $2n-2$ arcs, whether the arcs of $D$ can be partitioned into an out branching $B^+_u$ and and in-branching $B^-_v$.
\end{corollary}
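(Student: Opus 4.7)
The plan is a polynomial-time reduction from Problem~\ref{ioB} on 2-regular digraphs, shown NP-complete in Theorem~\ref{NPinout}. Given a 2-regular digraph $D$ on $n$ vertices with specified roots $u,v$, I would construct a digraph $D'$ on $n+1$ vertices with $2n=2(n+1)-2$ arcs, such that $D'$ admits a partition of its arcs into an out-branching and an in-branching if and only if $D$ has arc-disjoint $B^+_u,B^-_v$. The construction introduces one new vertex $z$ which absorbs both the in-arcs of $u$ and the out-arcs of $v$: letting $a_1,a_2$ be the in-neighbours of $u$ and $b_1,b_2$ the out-neighbours of $v$ in $D$, replace each arc $a_iu$ in $A(D)$ by $a_i\to z$ and each arc $vb_j$ by $z\to b_j$. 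Assuming $vu\notin A(D)$ (otherwise I would first subdivide the arc $vu$), the resulting $D'$ has $u$ as the unique vertex of in-degree $0$ and $v$ as the unique vertex of out-degree $0$, so any valid partition of $A(D')$ must have $u$ and $v$ as the respective roots.

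The equivalence follows by observing that in any partition of $D'$, the two in-arcs and two out-arcs of $z$ each split one-to-one between $B^+_u$ and $B^-_v$; the assignment $a_i\to z\in B^+_u$ corresponds (in $D$) to $a_iu$ being unused, while $a_i\to z\in B^-_v$ corresponds to $a_iu\in B^-_v$, and symmetrically for the $z\to b_j$ arcs. Thus a partition of $D'$ determines arc-disjoint branchings $B^+_u,B^-_v$ in $D$ whose two unused arcs are one of $\{a_1u,a_2u\}$ and one of $\{vb_1,vb_2\}$; conversely, starting from a pair $B^+_u,B^-_v$ in $D$ whose unused arcs are of this form, one builds the partition of $D'$ by translating each affected arc as above.

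The main obstacle is completing the forward direction via an \emph{exchange lemma}: starting from any arc-disjoint pair $B^+_u,B^-_v$ in a 2-regular $D$, one must show that the pair can be modified, without losing arc-disjointness, so that its two unused arcs consist of exactly one in-arc of $u$ and one out-arc of $v$. I would prove this by local swap arguments on the branchings, exploiting the very restricted degrees in 2-regular digraphs: when the current unused arcs are not in the desired positions, one locates suitable used arcs at $u$ or $v$ that can be swapped with the misplaced unused arcs while preserving both branching structures. A secondary technical point is to verify that the two branchings in $D'$ produced by lifting the normalised pair through the redirections at $z$ are genuinely acyclic; this may require strengthening the exchange lemma to select, among the four possible pairings of the unused in-arc of $u$ with the unused out-arc of $v$, one that avoids producing a cycle through $z$.
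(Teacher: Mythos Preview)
Your construction is considerably more intricate than what the paper has in mind, and as stated it has a real gap that you have not flagged.

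The paper treats Corollary~\ref{useall} as an immediate by-product of the \emph{proof} of Theorem~\ref{NPinout}, not of its statement. In that proof one builds a 2-regular digraph $D''$ such that arc-disjoint branchings $B^+_u,B^-_v$ exist in $D''$ if and only if the original 2-regular digraph has two arc-disjoint Hamiltonian cycles. Crucially, the argument shows that any such pair of branchings, restricted to the $D'$-part, consists of two Hamiltonian $(a^+,a^-)$-paths and therefore uses \emph{all} arcs of $D'$; the two leftover arcs always lie inside the fixed gadget $G\cup G'$. Since the explicit fragments in Figures~\ref{fig:DoublerootCatcher}/\ref{fig:DistinctRootCatcher} exhibit one concrete pair $e_1,e_2$ of gadget arcs that can be avoided whenever a solution exists, the digraph $D''-\{e_1,e_2\}$ has exactly $2|V(D'')|-2$ arcs and admits a partition into $B^+_u$ and $B^-_v$ if and only if $D''$ has arc-disjoint branchings. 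No exchange lemma or new vertex is needed.

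In your approach, besides the unproven exchange lemma, the direction you treat as automatic is not: from a partition of $D'$ you claim to read off branchings in $D$ by sending $z b_j\in B^+_u(D')$ to $vb_j\in B^+_u(D)$ and dropping $a_iz\in B^+_u(D')$. But in $B^+_u(D')$ the vertex $z$ has exactly one in-arc $a_iz$ and one out-arc $zb_j$; deleting $z$ splits the tree into the subtree rooted at $b_j$ and the rest, and reattaching via $vb_j$ yields a tree only when $v$ lies outside that subtree. If the $(u,v)$-path in $B^+_u(D')$ passes through $z$ --- which nothing prevents --- then $v$ is a descendant of $b_j$ and your $B^+_u(D)$ contains a cycle. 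The symmetric problem occurs for $B^-_v$. So the ``partition of $D'$ $\Rightarrow$ branchings in $D$'' step needs the same kind of swap argument you envisaged for the other direction; it is not the routine half. Even granting the exchange lemma in $D$, you would still need a second exchange lemma in $D'$, and it is not clear both can be made to hold simultaneously with compatible choices at $z$.
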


In contrast to this, it follows from our discussion in the introduction that there is a polynomial algorithm for checking whether a given set of $2n-2$ edges in a graph on $n$ vertices can be partitioned into two edge-disjoint spanning trees and there is also a polynomial algorithm for checking whether at set of $2n-2$ arcs in a digraph on $n$ vertices can be partitioned into two arc-disjoint out-branchings with or without prescribed roots.

\section{Hamiltonian Paths in 2-regular digraphs}

We begin our investigation by considering a problem that requires all arcs, namely deciding if a 2-regular digraph contains two arc-disjoint Hamiltonian cycles. We point out that all of the results in this section have been proven by Yeo earlier but the proof of Theorem \ref{hampathNP} was never published and since we use the technique illustrated below in our proof in Section \ref{ioBsec} we have included it here for completeness. We also point out that Ples\'nik \cite{plesnikIPL8} proved much earlier that both the Hamilton cycle and the Hamilton path problem are NP-complete already for planar 2-regular digraphs.

\begin{theorem}\cite[Theorem 6.1.3]{bangTCSta}\\ \label{hamcycleNP}
It is NP-complete to decide whether a given 2-arc-strong 2-regular digraph $D$ contains two arc-disjoint Hamiltonian cycles.
\end{theorem}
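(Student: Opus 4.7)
Membership in NP is immediate: given two candidate subdigraphs we can verify in polynomial time that each is a Hamiltonian cycle and that the two are arc-disjoint. For NP-hardness, the plan is to reduce in polynomial time from the directed Hamilton cycle problem restricted to 2-regular digraphs, which Ples\'nik~\cite{plesnikIPL8} proved to be NP-complete.

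Given a 2-regular digraph $H$ as an instance of that problem, we construct a 2-arc-strong 2-regular digraph $D$ such that $D$ decomposes into two arc-disjoint Hamilton cycles if and only if $H$ has a Hamilton cycle. The construction replaces each vertex $v$ of $H$ (with in-arcs $a_1,a_2$ and out-arcs $b_1,b_2$) by a fixed-size gadget $G_v$: the arcs $a_1,a_2$ are redirected into two ``in-ports'' of $G_v$, while $b_1,b_2$ emanate from two ``out-ports'', and a constant number of internal vertices and arcs are added so that $D$ is globally 2-regular. The gadget is designed so that, in any decomposition $A(D)=A(C_1)\sqcup A(C_2)$ into two arc-disjoint Hamilton cycles, each $C_i$ enters $G_v$ through one in-port and leaves through one out-port, and the resulting matching of in-ports to out-ports at $v$ is exactly one of the two pairings $\{(a_1,b_1),(a_2,b_2)\}$ or $\{(a_1,b_2),(a_2,b_1)\}$. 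Concatenating the pairings chosen by $C_1$ over all vertices of $H$ then traces out a Hamilton cycle of $H$. Conversely, any Hamilton cycle of $H$ prescribes a compatible pairing at each vertex, and the pair $C_1,C_2$ can be built explicitly by following the corresponding traversal inside each $G_v$ in one cycle and taking the complementary internal arcs in the other.

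The main obstacle is the gadget design. The gadget $G_v$ must be 2-regular together with the external arcs, robust enough to ensure that $D$ is 2-arc-strong, and---most importantly---rigid enough that the only Hamilton cycle traversals of $G_v$ in a decomposition of $A(D)$ are the two intended complementary ones. In particular, a Hamilton cycle of $D$ must not be allowed to enter and exit $G_v$ multiple times in configurations that would yield inconsistent pairings at $v$, nor to leave internal vertices of some gadget unvisited by one of the two cycles. This rigidity is enforced by choosing an internal structure in which any path entering $G_v$ must, by degree and local connectivity constraints, visit every internal vertex exactly once before exiting via a uniquely determined out-port. Once this property of $G_v$ is established, the equivalence between arc-disjoint Hamilton cycle pairs of $D$ and Hamilton cycles of $H$ reduces to straightforward bookkeeping at each gadget, and the polynomial size bound on $D$ is clear.
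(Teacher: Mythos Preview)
Note first that the paper itself does not prove this theorem: it is quoted as Theorem~6.1.3 of \cite{bangTCSta} (attributed there to Yeo) and used as a black box, so there is no in-paper argument to compare your attempt against. I can therefore only assess whether your outline is a viable proof on its own.

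It is not, because the reverse direction of your reduction breaks. Suppose $H$ has a Hamilton cycle $C$. You build $C_1$ in $D$ by threading $C$ through the gadgets and propose to obtain $C_2$ by ``taking the complementary internal arcs''. But the external arcs left to $C_2$ are precisely those of $A(H)\setminus A(C)$, and in a $2$-regular digraph this complement is a $1$-regular spanning subdigraph, i.e.\ a vertex-disjoint union of directed cycles that is in general \emph{not} a single Hamilton cycle. (Concretely, on $\{1,2,3,4\}$ with arcs $12,23,34,41,13,31,24,42$ the unique Hamilton cycle is $1\,2\,3\,4\,1$ and its complement is the two $2$-cycles $1\,3\,1$ and $2\,4\,2$.) Since each $G_v$ meets the rest of $D$ only through the four original arcs at $v$, contracting every $G_v$ to a point maps $C_2$ onto this complement; if the complement has $k>1$ components, so does $C_2$, and $C_2$ is not Hamiltonian. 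Making the gadget ``rigid'' with a uniquely determined out-port only makes this worse: it fixes the pairing at every vertex in advance and removes any room to repair $C_2$.

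What your construction actually yields (once one also checks the easy forward direction carefully) is that $D$ has two arc-disjoint Hamilton cycles if and only if $H$ does, which is the very problem you are trying to show hard; the reduction is circular. A correct argument needs either a different NP-hard source problem or a mechanism that can merge the cycles of $A(H)\setminus A(C)$ into one, and your sketch supplies neither.
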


\begin{theorem} \label{hampathNP}
It is NP-complete to decide whether a given 2-arc-strong 2-regular digraph contains two arc-disjoint Hamiltonian paths (with any number of specified end vertices).
\end{theorem}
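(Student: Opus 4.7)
The plan is to reduce from Theorem~\ref{hamcycleNP}: given a 2-arc-strong 2-regular digraph $D$, I construct a 2-arc-strong 2-regular digraph $D'$ so that $D$ has two arc-disjoint Hamiltonian cycles if and only if $D'$ has two arc-disjoint Hamiltonian paths with suitably specified end vertices. Pick a vertex $v\in V(D)$ with in-neighbors $a_1,a_2$ and out-neighbors $c_1,c_2$. Split $v$ into $v^-, v^+$ via the operation from the introduction, then add two new vertices $x, y$ together with the six gadget arcs $v^- x$, $v^- y$, $x v^+$, $y v^+$, $xy$, $yx$. A degree count shows $D'$ is 2-regular, and 2-arc-strength follows from a short case analysis on how a proper non-empty $X\subset V(D')$ meets the gadget $\{v^-,v^+,x,y\}$: if $X$ contains all or none of the gadget its cut agrees with a cut of $D$, while the remaining cases each contribute at least two internal gadget arcs to the cut.

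For the forward direction, two arc-disjoint Hamiltonian cycles $C_1, C_2$ of $D$ use distinct in-arcs and out-arcs at $v$, so routing $C_1$ through the gadget as $a_{i_1}\,v^-\,x\,y\,v^+\,c_{j_1}$ and $C_2$ as $a_{i_2}\,v^-\,y\,x\,v^+\,c_{j_2}$ produces two arc-disjoint Hamiltonian cycles of $D'$; deleting $v^- x$ from the first and $v^- y$ from the second yields an arc-disjoint pair of Hamiltonian paths of $D'$, one from $x$ to $v^-$ and one from $y$ to $v^-$.

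For the reverse direction the structural key is that the only arcs of $D'$ between the gadget and $V(D)\setminus\{v\}$ are the two arcs entering $v^-$ and the two arcs leaving $v^+$. Hence any Hamiltonian path of $D'$ enters the gadget at most once (at $v^-$) and leaves at most once (at $v^+$). Prescribing the first path to start at $x$ and the second at $y$ (or equivalently requiring both to end at $v^-$), a short case enumeration of the possible gadget traversals forces each Hamiltonian path to have the shape $x\,y\,v^+\cdots a_i\,v^-$ or $y\,x\,v^+\cdots a_i\,v^-$; contracting $v^-, v^+$ back to $v$ then turns the two paths into arc-disjoint Hamiltonian cycles of $D$.

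The main obstacle is carrying out this enumeration cleanly and checking that no admissible Hamiltonian path of $D'$ under the specified endpoint constraints fails to correspond to a Hamiltonian cycle of $D$. The same classification also shows that every Hamiltonian path of $D'$ with both endpoints in the gadget comes from a Hamiltonian cycle of $D$, so the reduction carries over to any variant in which at least one endpoint per path is specified to lie in the gadget; the variant with no specified end vertices is then handled by attaching a second identical gadget at another vertex of $D$, which rules out the remaining ``both endpoints outside'' configuration.
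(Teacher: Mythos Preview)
Your overall strategy---split a vertex and insert a gadget between the two halves---is exactly the paper's approach, and your reduction is correct for those variants in which each of the two Hamiltonian paths has at least one endpoint specified to lie in the gadget. The gap is in the unspecified-endpoint case. Your $4$-vertex gadget on $\{v^-,x,y,v^+\}$ has two spanning $(v^-,v^+)$-paths, namely $v^-xyv^+$ and $v^-yxv^+$, and these are \emph{arc-complementary}: removing one leaves the other, which is still a connected spanning subgraph of the gadget. Consequently nothing prevents an arc-disjoint pair $P_1,P_2$ of Hamiltonian paths of $D'$ from each passing straight through the gadget (entering at $v^-$, leaving at $v^+$) with all four endpoints outside; contracting the gadget then yields two arc-disjoint Hamiltonian \emph{paths} of $D$ through $v$, not Hamiltonian cycles. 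Your proposed fix of attaching a second identical gadget at another vertex $w$ does not rule this out: $P_1$ and $P_2$ can each pass through \emph{both} gadgets using complementary spanning paths, with all four endpoints still outside (a quick arc count shows this is consistent---exactly two non-gadget arcs remain unused). So the reverse direction fails for the $0$-specified variant, and the $1$-specified variant is likewise not covered.

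This is precisely the obstacle the paper's Cycle Breaker Gadget is engineered to avoid. That gadget has the property that \emph{every} spanning $(s,t)$-path, once removed, disconnects some gadget vertex from the rest; hence if one Hamiltonian path traversed the gadget from $s$ to $t$, the other (being arc-disjoint) could not cover the gadget at all. This forces \emph{both} Hamiltonian paths to contain a $(t,s)$-segment through all of the original digraph, i.e.\ to correspond to Hamiltonian cycles of $D$, with no endpoint specification needed. Your gadget is simply too symmetric to have this blocking property; to repair the argument for the unrestricted case you need a gadget in which no spanning $(s,t)$-path leaves a connected remainder.
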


\begin{proof}
We will reduce from the problem of deciding if a 2-arc-strong 2-regular digraph contains two arc-disjoint Hamiltonian cycles, which according to Theorem \ref{hamcycleNP} is NP-complete.

To do this we will use the Cycle Breaker Gadget shown in Figure \ref{fig:CycleBreaker}. Notice that it is impossible to remove the arcs of a spanning $(s,t)$-path from the gadget without disconnecting some vertices.

\begin{figure}[htb]
\centering
\includegraphics[width=0.2\textwidth]{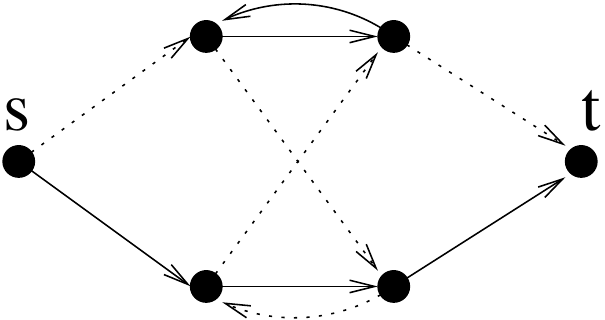}
\caption{The Cycle Breaker Gadget. One of the two possible spanning $(s,t)$-paths is highlighted. Removing it disconnects 2 vertices.}
\label{fig:CycleBreaker}
\end{figure}

Given a 2-arc-strong 2-regular digraph $D$ construct $D'$ by choosing a vertex $a\in D$ and splitting it into it's in-going part $a^-$ and it's out-going part $a^+$. Then construct $D''$ by adding a copy $G$ of the Cycle Breaker Gadget to $D'$ and identifying $a^-$ with $s$ and $t$ with $a^+$. Notice that this turns $D''$ into a 2-arc-strong 2-regular digraph.

To conclude the reduction we will argue that $D''$ contains two arc-disjoint Hamiltonian paths if and only if $D$ contains two arc-disjoint Hamiltonian cycles.

For sufficiency assume that $D$ contains arc-disjoint Hamiltonian cycles $C$ and $C'$. Then $C$ and $C'$ can be considered as Hamiltonian $(a^+,a^-)$-paths in $D'$. Now adding the two arc-disjoint path fragments $P$ and $Q$ both covering the Cycle Breaker Gadget, see Figure \ref{fig:BrokenPath}, to $C$ and $C'$ considered in $D''$ gives an arc-disjoint pair of paths, where $C+P$ is a Hamiltonian $(d,e)$-path and $C'+Q$ is a Hamiltonian $(b,c)$-path of $D'$.

\begin{figure}[htb]
\centering
\includegraphics[width=0.2\textwidth]{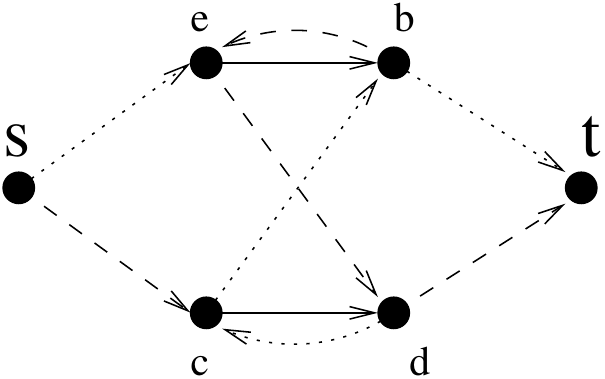}
\caption{The path fragments $P$ shown with dotted arcs and $Q$ shown with dashed.}
\label{fig:BrokenPath}
\end{figure}

Conversely let $P'$ and $Q'$ be arc-disjoint Hamiltonian paths of $D'$. Neither can contain an $(s,t)$-path since that would disconnect part of the Cycle Breaker Gadget. But that implies that both must contain a $(a^+,a^-)$-path spanning all of $D'$ translating into two arc-disjoint Hamiltonian cycles of $D$ when $a^+$ and $a^-$ are identified.

Since end vertices of the Hamiltonian paths are already forced to lie inside $G$, specifying one or more of them and requesting an arc-disjoint pair of a Hamiltonian $(d,e)$-path and a Hamiltonian $(b,c)$-path still gives the same reduction.
\end{proof}

\section{Arc-disjoint in- and out-branchings in k-regular digraphs}\label{ioBsec}

We now show that, using the Cycle Breaker Gadget, we can restrict the behavior of branchings in 2-regular digraphs. Given a 2-regular digraph $D$ we immediately see that removing two arc-disjoint branchings would leave only two arcs in $D$. Suppose we specify two vertices $u,v\in D$ and assume arc-disjoint branchings $B_u^+$ and $B_v^-$ exist.

Then the fact that both ${d^+_{B_v^-}(x)=1}$ for all ${x\in V-v}$ and ${d^+_{B_v^-}(v)=0}$ must hold implies that $v$ is the only vertex that could have two out-going arcs in $B_u^+$. Similarly only $u$ can have two in-going arcs in $B_v^-$.

So our branchings may only really "branch" on $u$ or $v$, in all other vertices they will behave just like paths.

\begin{theorem}\label{NPinout}

Problem \ref{ioB} (with or without fixed not necessarily distinct roots) is NP-complete for 2-arc-strong 2-regular digraphs.

\end{theorem}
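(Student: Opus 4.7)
The plan is to reduce from the problem of deciding whether a 2-arc-strong 2-regular digraph admits two arc-disjoint Hamiltonian paths with prescribed endpoints, which is NP-complete by Theorem \ref{hampathNP}. Start from such an instance $D''$ obtained via the Cycle Breaker construction, in which any two arc-disjoint Hamilton paths are forced to be a $(d,e)$-path and a $(b,c)$-path with all four endpoints lying inside the gadget $G$. Set $u:=d$ and $v:=c$. We claim that $D''$ admits arc-disjoint branchings $B^+_d$ and $B^-_c$ if and only if the two arc-disjoint Hamilton paths exist. Membership in NP is immediate (guess the two branchings and check).

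The forward direction is trivial: the Hamilton $(d,e)$-path is itself an out-branching $B^+_d$ and the Hamilton $(b,c)$-path is itself an in-branching $B^-_c$, and they are arc-disjoint by assumption.

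For the converse, the main leverage is the structural remark made just before the theorem: in any arc-disjoint pair $(B^+_u,B^-_v)$ in a 2-regular digraph, the out-branching $B^+_u$ can have out-degree $2$ only at $v$, and $B^-_v$ can have in-degree $2$ only at $u$. Hence away from the other branching's root, each branching behaves like a path. The key remaining step is to show that $B^+_d$ does not actually branch at $c$ and $B^-_c$ does not actually have in-degree $2$ at $d$; once this is done, both branchings are Hamiltonian paths (from $d$ and to $c$, respectively), and by the endpoint-forcing of the Cycle Breaker Gadget they must be the very paths sought, yielding the desired arc-disjoint pair.

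The main obstacle is therefore the case analysis that rules out genuine branching at $c$ (and symmetrically at $d$). A branching at $c$ in $B^+_d$ would create two out-degree-$0$ vertices, leaving only two arcs of $D''$ outside $B^+_d\cup B^-_c$; combined with the degree and connectivity constraints imposed by the Cycle Breaker Gadget, we expect to force either a directed cycle inside a branching or a disconnected component, both contradictions. Should the bare gadget be insufficient, the remedy is a small auxiliary gadget attached at $c$ (and symmetrically at $d$) designed so that simultaneously using both out-arcs of $c$ in the out-branching forces a cut violation. Finally, the unfixed-root variant follows because the endpoint-forcing of the Cycle Breaker Gadget rules out every root choice other than $(d,c)$ in our construction, so the existence question without prescribed roots reduces to the fixed-root version, completing the reduction for both formulations.
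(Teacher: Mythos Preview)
Your converse direction has a genuine gap that you yourself acknowledge: you never establish that $B^+_d$ does not branch at $c$ (nor symmetrically that $B^-_c$ has in-degree~$1$ at $d$). The sentence ``we expect to force either a directed cycle \ldots or a disconnected component'' is a hope, not an argument, and offering an unspecified ``small auxiliary gadget'' as a fallback concedes that the proof is unfinished. Your treatment of the unfixed-root variant has the same defect: you assert that the Cycle Breaker Gadget ``rules out every root choice other than $(d,c)$'', but the gadget forces the endpoints of arc-disjoint Hamiltonian \emph{paths}; you give no argument that it pins down the roots of \emph{branchings}, which are allowed to branch once.

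The paper sidesteps exactly this obstacle. It reduces directly from arc-disjoint Hamiltonian \emph{cycles} in $D$ (rather than going through Theorem~\ref{hampathNP}) and inserts \emph{two} Cycle Breaker Gadgets $G,G'$ in series between $a^-$ and $a^+$. Crucially, the argument never requires the branchings to be Hamiltonian paths in all of $D''$: the path-like-except-at-the-other-root observation is used only to force both roots into $G\cup G'$, after which the restrictions of $B^+_u$ and $B^-_v$ to $D'$ are automatically arc-disjoint $(a^+,a^-)$-Hamiltonian paths, hence Hamiltonian cycles in $D$ --- no case analysis at specific gadget vertices is needed. The second gadget is also what makes the forward direction work uniformly: with two gadgets one can exhibit explicit branching fragments yielding either coinciding or distinct roots (Figures~\ref{fig:DoublerootCatcher} and~\ref{fig:DistinctRootCatcher}), so a single construction settles the fixed-same-root, fixed-distinct-root and unfixed-root variants at once. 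Your single-gadget route, even if the missing case analysis could be completed, would at best cover the fixed-distinct-root case.
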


\begin{proof}

In order to prove NP-completeness we will reduce from the problem of deciding if a 2-arc-strong 2-regular digraph $D$ contains two arc-disjoint Hamiltonian cycles.

Construct $D'$ from $D$ by choosing a vertex $a\in D$ and splitting it into it's in-going part $a^-$ and it's out-going part $a^+$. Then construct $D''$ by adding two copies $G,G'$ of the Cycle Breaker Gadget to $D'$ and identifying $a^-$ with $s$, $t$ with $s'$ and $t'$ with $a^+$, see Figure \ref{fig:DoublerootCatcher}.

\begin{figure}[htb]
\centering
\includegraphics[width=0.45\textwidth]{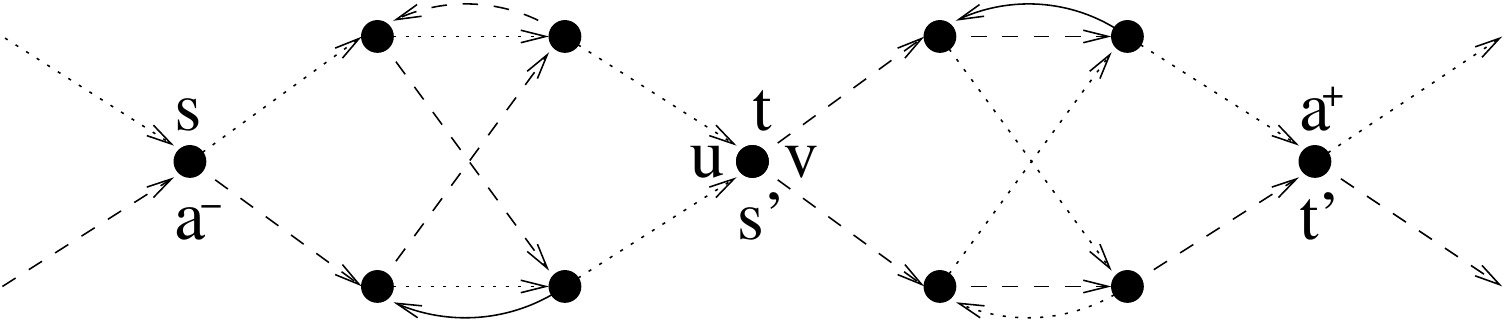}
\caption{The in-branching fragment $P$ shown with dotted arcs and the out-branching fragment $Q$ shown with dashed.}
\label{fig:DoublerootCatcher}
\end{figure}

Obviously there is no $(s,t')$-path covering all vertices of $G\cup G'$ which leaves a connected graph once the arcs of the path are removed. This implies that neither $B_u^+$ nor $B_v^-$ can consist of a covering $(s,t')$-path when restricted to $G\cup G'$. By our observation above we know that our branchings can only behave differently than a path at the root of the other branching, so both $u$ and $v$ must be in $G\cup G'$. From this follows that, when restricted to arcs of $D'$, the branchings $B_u^+$ and $B_v^-$ are arc-disjoint Hamiltonian $(a^+,a^-)$-paths in $D'$ and thus arc-disjoint Hamiltonian cycles in $D$.

Conversely, if $D$ contains two arc-disjoint Hamiltonian cycles $C$ and $C'$, then those correspond to arc-disjoint $(a^+,a^-)$-paths in $D'$ and adding branching fragments $P$ and $Q$ as in Figure \ref{fig:DoublerootCatcher} we can construct arc-disjoint in- and out-branchings in $G'$ with the same root.

\begin{figure}[htb]
\centering
\includegraphics[width=0.45\textwidth]{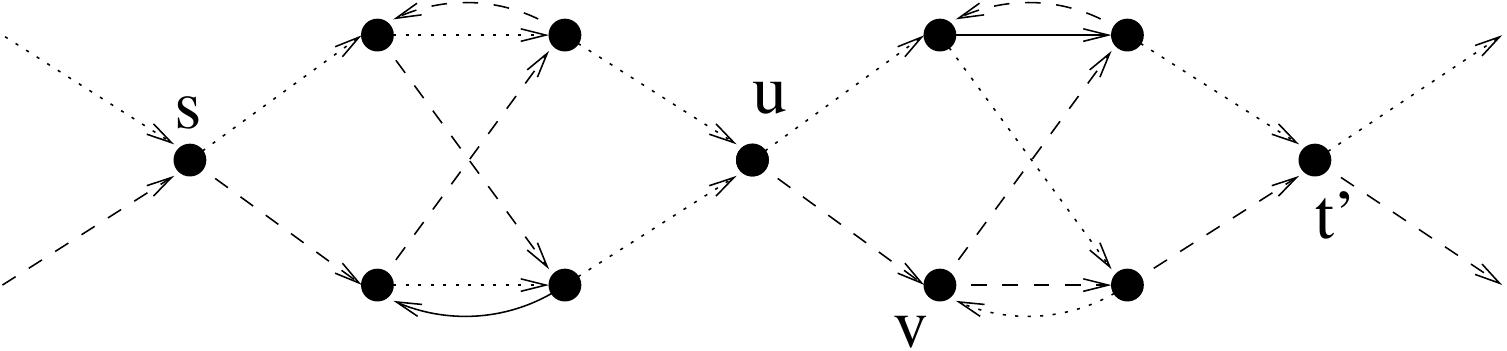}
\caption{The in-branching fragment $P$ shown with dotted arcs and the out-branching fragment $Q$ shown with dashed.}
\label{fig:DistinctRootCatcher}
\end{figure}

If we add branching fragments $P$ and $Q$ as in figure \ref{fig:DistinctRootCatcher} instead we get distinct roots.
\end{proof}

Since 2-regular digraphs are also Eulerian this directly implies:

\begin{corollary}
Problem \ref{ioB} is NP-complete for Eulerian digraphs.
\end{corollary}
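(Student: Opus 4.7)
The plan is to observe that the corollary is essentially a containment statement: every 2-arc-strong 2-regular digraph is already Eulerian, so NP-hardness transfers upward from the subclass to the superclass. Concretely, I would first recall the standard characterisation that a digraph is Eulerian precisely when it is connected and satisfies $d^+(v)=d^-(v)$ at every vertex $v$. In a 2-regular digraph the degree condition holds automatically with both sides equal to 2, and 2-arc-strongness in particular implies strong connectivity, hence connectivity of $UG(D)$. Therefore any instance produced by the reduction in the proof of Theorem \ref{NPinout} is itself Eulerian.

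Given this, the reduction from the two arc-disjoint Hamiltonian cycles problem established in Theorem \ref{NPinout} can be reused verbatim: it is a polynomial-time reduction whose outputs already satisfy the Eulerian hypothesis, so it witnesses NP-hardness of Problem \ref{ioB} on Eulerian digraphs. Membership of Problem \ref{ioB} in NP is immediate, since a certificate consisting of the arc sets of the alleged branchings $B^+_u$ and $B^-_v$ can be checked in polynomial time by verifying spanning, arc-disjointness, and the in-/out-degree conditions at each vertex. There is no real obstacle here beyond being explicit about the definition of Eulerian used and noting that the reduction's output lies in the smaller class; once those two points are recorded, the corollary follows without further work.
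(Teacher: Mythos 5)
Your proposal is correct and matches the paper's own argument, which dismisses the corollary with the single observation that (2-arc-strong) 2-regular digraphs are Eulerian, so the hardness from Theorem \ref{NPinout} transfers directly. Your added remarks on the Eulerian characterisation and NP membership are just explicit versions of what the paper leaves implicit.
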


With a bit of work we can extend our result further to k-regular digraphs.

\begin{corollary}

Problem \ref{ioB} is NP-complete for 2-arc-strong k-regular digraphs.

\end{corollary}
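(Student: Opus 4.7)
The plan is to reduce from Problem \ref{ioB} on 2-arc-strong 2-regular digraphs, which is NP-complete by Theorem \ref{NPinout}. Given such an instance $D$ on $n$ vertices, I would construct a 2-arc-strong $k$-regular digraph $D^{\star}$ such that $D^{\star}$ has arc-disjoint $B_u^+, B_v^-$ if and only if $D$ does, with the roots placed inside auxiliary structure as in the proof of Theorem \ref{NPinout}.

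To raise the in- and out-degree of each original vertex from $2$ to $k$, I would splice in $k-2$ rigid layers of Cycle-Breaker-type gadgets. Concretely, for every original vertex $w$ and every layer $i\in\{1,\ldots,k-2\}$, insert a small subdigraph with a fresh in-port and out-port at $w$, so that the in-degree and out-degree of $w$ each grow by one per layer, while the internal auxiliary vertices are brought up to in- and out-degree $k$ either by local doubling or by identifying their ports with the corresponding ports of neighbouring layers. Because the Cycle Breaker Gadget has no removable spanning $(s,t)$-path, the arcs added in each layer are forced to be traversed in any pair of arc-disjoint branchings along one of a fixed collection of branching fragments, completely analogous to the fragments $P, Q$ of Figures \ref{fig:DoublerootCatcher} and \ref{fig:DistinctRootCatcher}.

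The correctness argument then proceeds along the lines of Theorem \ref{NPinout}. For the easy direction, any pair of arc-disjoint $B_u^+, B_v^-$ in $D$ extends to $D^{\star}$ by threading the forced gadget fragments through each of the $k-2$ layers. For the converse, the rigidity of every Cycle Breaker layer forces any candidate branching pair in $D^{\star}$ to use only the designated fragment arcs inside the gadgets, so that after deleting these forced arcs the remaining restrictions to $V(D)$ are a valid pair of arc-disjoint $B_u^+, B_v^-$ in $D$. The observation from Theorem \ref{NPinout} that branchings in a $k$-regular setting may still only ``branch'' at the roots (after accounting for each extra in- and out-degree unit contributed by a gadget layer) is what drives the forcing.

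The main obstacle is the gadget engineering: ensuring simultaneously that the auxiliary vertices introduced by the $k-2$ layers are themselves exactly $k$-regular, that $D^{\star}$ remains 2-arc-strong, and that the rigidity of each layer is preserved. For $k=3$ a single Cycle Breaker per vertex yields an essentially direct adaptation of the proof of Theorem \ref{NPinout}. For larger $k$ the layers must be interlocked carefully—or replaced by a single ``thick'' gadget of adjustable size—so that the regularity and arc-connectivity constraints are met without introducing new branching freedom that would short-circuit the reduction.
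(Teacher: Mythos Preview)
Your proposal is not a proof but a sketch with an explicitly unresolved gap: you identify ``the main obstacle is the gadget engineering'' and then do not resolve it, saying only that for $k>3$ the layers ``must be interlocked carefully---or replaced by a single `thick' gadget of adjustable size''. That is precisely the content of the corollary; without a concrete construction of a $k$-regular, 2-arc-strong $D^{\star}$ together with a verified branching correspondence, nothing has been proved. Moreover, the heuristic you import from Theorem \ref{NPinout}---that branchings can only ``branch'' at the roots---is specific to the $2$-regular case (it follows from the arc count $2n-2(n-1)=2$) and fails outright for $k\ge 3$, where $kn-2(n-1)$ arcs remain after removing two branchings. So the Cycle-Breaker ``rigidity'' argument you invoke does not transfer as stated, and the forcing you need inside each layer is not established.

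The paper's argument avoids all of this. It also reduces from the $2$-regular case, but uses no Cycle-Breaker gadgets at all: for every vertex $a$ of $D$ it splits $a$ into $a^-,a^+$, adds two parallel arcs $a^-a^+$, attaches a tiny $2$-vertex multigraph $H$ on $\{b,c\}$ with arcs $\{bc,bc,cb,cb\}$, and finally adds $k-2$ parallel copies of each of $a^-b$, $ba^+$, $a^+c$, $ca^-$. This is manifestly $k$-regular and $2$-arc-strong, and the correspondence of arc-disjoint $B_u^+$, $B_v^-$ between $D$ and $D'$ is immediate in both directions by simply extending or contracting through the local gadget. No rigidity property and no layer bookkeeping are required. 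If you want to salvage your approach, you would need to exhibit the interlocked gadget explicitly and prove the branching correspondence without relying on the $2$-regular degree-count argument; but the paper's construction shows that a far simpler route is available.
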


\begin{proof}

In order to prove NP-completeness we will reduce from Problem \ref{ioB} for 2-arc-strong 2-regular digraphs.

Let the digraph $H$ have vertex set $\{b,c\}$ and arc set $\{bc,bc,cb,cb\}$.

Given a 2-arc-strong 2-regular digraph $D$ construct $D'$ by doing the following for every vertex $a\in V$. Split $a$ into $a^+$ and $a^-$, add two copies of the arc $a^-a^+$, add a copy of $H$ and finally add $k-2$ copies of the arcs $a^-b$, $ba^+$, $a^+c$ and $ca^-$, see figure \ref{fig:kMaker}. Notice that $D'$ is now a 2-arc-strong k-regular digraph.

\begin{figure}[htb]
\centering
\includegraphics[width=0.45\textwidth]{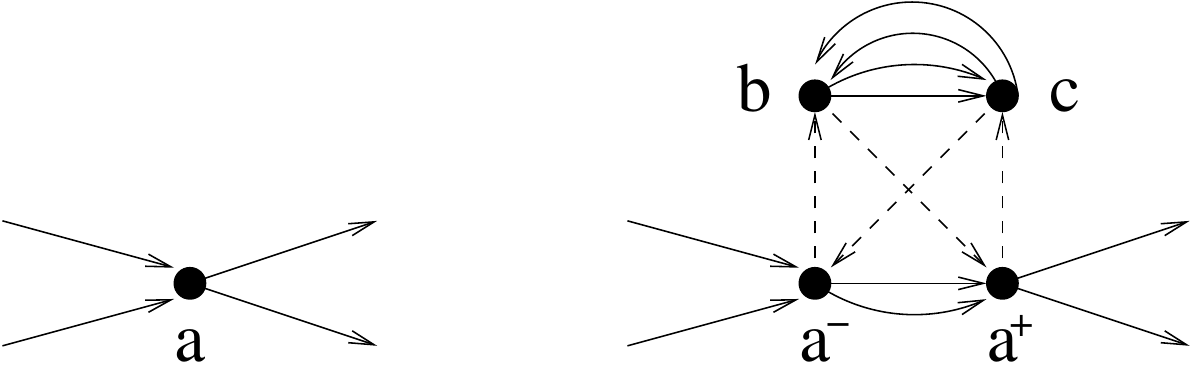}
\caption{Construction of a k-regular digraph from a 2-regular one. The dashed arcs represent $k-2$ parallel arcs.}
\label{fig:kMaker}
\end{figure}

An arc-disjoint pair of an out-branching $B_u^+$ and an in-branching $B_v^-$ in $D$ can obviously be extended to an in- and out-branching pair in $D'$.

Conversely given arc-disjoint branchings $B_u^+$ and $B_v^-$ in $D'$, simply undoing the steps we followed to construct $D'$ from $D$ will give a pair of branchings in $D$.
\end{proof}

\section{Arc-disjoint out-branchings and spanning-trees}\label{BTsec}

Since Problem \ref{ioB} remained NP-complete for 2-regular digraphs one might wonder whether this is still the case for the mixed version, where the in-branching is replaced by a spanning tree in $UG(D)$.

We will show that Problem \ref{oBsT} is  polynomially decidable for 2-regular digraphs, provided that the vertex $u$ is not specified in advance. In fact we will consider a more general problem in  k-regular digraphs.

\begin{problem}\label{kBT}

Given a $k$-regular digraph $D$ and ${0<l<k}$ decide whether $UG(D)$ contains a collection of  $k$-edge-disjoint spanning trees ${T_1,\ldots{},T_k}$, such that ${T_1,\ldots{},T_l}$ are out-branchings in $D$ (whereas we do not demand this from the remaining trees).

\end{problem}

For $l=0$ respectively $l=k$ we would get the purely undirected respectively purely directed problem characterized in Theorem \ref{tut} and Theorem \ref{root} respectively, where we referenced polynomial algorithms that solve both pure problems. The simplest mixed problem (with $k=2$ and $l=1$) on the other hand is already NP-complete for general digraphs \cite{bangTCSta}. But for k-regular digraphs we have:

\begin{theorem} \label{kequal}

Let $D=(V,A)$ be a k-regular digraph then $D$ contains k arc-disjoint out-branchings (with no restrictions on the roots) if and only if it contains k arc-disjoint spanning-trees.

\end{theorem}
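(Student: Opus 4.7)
The ``only if'' direction is immediate: $k$ arc-disjoint out-branchings in $D$ yield $k$ edge-disjoint spanning trees in $UG(D)$ simply by forgetting orientations. For the converse, the plan is to exhibit, for each $k$-regular digraph $D$ whose underlying graph has $k$ edge-disjoint spanning trees, a root vector $r$ with $r(V)=k$ satisfying the hypothesis of Theorem \ref{root}; Theorem \ref{root} will then produce the desired $k$ arc-disjoint out-branchings.

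The starting point is Tutte's inequality (Theorem \ref{tut}): every partition $\mathcal{F}=V_1,\ldots,V_t$ of $V$ satisfies $e_\mathcal{F}\geq k(t-1)$. Since every arc crossing the partition is counted exactly once in $\sum_{i=1}^t d^-(V_i)$ (by the part it enters), this already reads $\sum_i d^-(V_i)\geq k(t-1)$, independently of regularity. I next upgrade this from partitions to sub-partitions using $k$-regularity: given pairwise disjoint nonempty $X_1,\ldots,X_s\subseteq V$, extend them to a partition of $V$ by adjoining the singletons of $V\setminus\bigcup_i X_i$, each of which has in-degree exactly $k$. Applying Tutte's inequality to this extended partition and cancelling the $k|V\setminus\bigcup_i X_i|$ contribution of the singletons yields $\sum_{i=1}^s d^-(X_i)\geq k(s-1)$. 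Applied to the subcollection of indices with $d^-(X_i)<k$, this rearranges to the sub-partition inequality $\sum_{i=1}^s (k-d^-(X_i))^+\leq k$.

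The last step, which I expect to be the main obstacle, is to convert this sub-partition inequality into an actual integer root vector $r$ with $r(V)=k$ and $r(X)\geq k-d^-(X)$ for every nonempty $X\subseteq V$. This conversion is Frank's theorem for arc-disjoint out-branchings with unrestricted roots (alluded to in the introduction and proved via a matroid-partition argument); necessity of the sub-partition inequality is the routine observation that $\sum_i (k-d^-(X_i))^+\leq \sum_i r(X_i)\leq r(V)=k$ whenever $r$ is feasible, while sufficiency is the non-trivial content. With such an $r$ in hand, Theorem \ref{root} delivers $k$ arc-disjoint out-branchings in $D$ and completes the converse.
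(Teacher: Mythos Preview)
Your argument is correct, but it takes a noticeably different route from the paper's. You derive the sub-partition inequality $\sum_i (k-d^-(X_i))^+\le k$ from Tutte's condition plus $k$-regularity, and then invoke Frank's characterisation of digraphs admitting $k$ arc-disjoint out-branchings with free roots to produce a root vector. This is valid, but the last step---turning the sub-partition inequality into an actual root vector---is precisely the non-trivial content, and you black-box it; the paper only cites Frank's \emph{algorithm}, not this characterisation, so you are importing an extra theorem.

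The paper's proof is more direct and entirely self-contained modulo Theorem~\ref{root}. It simply \emph{writes down} the root vector: if $T_1,\ldots,T_k$ are the given arc-disjoint spanning trees and $D'=\bigcup_i T_i$, then exactly $k$ arcs $u_1v_1,\ldots,u_kv_k$ of $D$ are missing from $D'$, and one sets $r(v)=|\{i:v_i=v\}|$. Regularity gives $d^-_{D'}(v)=k-r(v)$, hence $|A_{D'}(X)|=k|X|-r(X)-d^-_{D'}(X)$ for any $X$; on the other hand each spanning tree contributes at most $|X|-1$ arcs inside $X$, so $|A_{D'}(X)|\le k|X|-k$. Comparing the two yields $d^-_{D'}(X)\ge k-r(X)$, i.e.\ $r$ is a root vector for $D'$ (and hence for $D$). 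So the paper bypasses Frank's theorem entirely and, as a bonus, tells you explicitly where the roots are: at the heads of the $k$ arcs left over after deleting the spanning trees.
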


\begin{proof}

Since every out-branching is also a spanning tree necessity is obvious.

For sufficiency assume that $D$ contains the k spanning trees $T_1,\ldots,T_k$. Since $|A(T_i)|=|V|-1$ we observe that the graph $D'=D\langle \bigcup_{i=1}^k{T_i}\rangle$, that is the union of the spanning trees, has exactly $k$  arcs fewer than $D$. We denote these arcs by ${u_1v_1,\ldots,u_kv_k}$ so that ${D'=D-\{u_1v_1,\ldots,u_kv_k\}}$.

Let $r(X)=\sum_{i=1}^k{|X\cap \{v_i\}|}$ for $X\subseteq V$ and notice that now every vertex $v$ in $D'$ has ${d^-_{D'}(v)=k-r(v)}$. We claim that $r$ is a root vector of $D'$, which would immediately imply the existence of k arc-disjoint out-branchings since ${r(V)=k}$. To prove $r$ is a root vector of $D'$ it is sufficient, by Theorem \ref{root}, to prove that ${d_{D'}^-(X)\geq k-r(X)}$ holds for all $X\subset V$.

Since $D'$ is the union of k spanning trees we have ${|A_{D'}(X)|\leq k|X|-k}$ for all ${X\subseteq V}$. On the other hand we know the in-degrees in $D'$ so we can give the exact number of arcs as ${|A_{D'}(X)|=k|X|-d^-_{D'}(X)-r(X)}$.

Combining this we get
\begin{align*} k|X|-d^-_{D'}(X)-r(X)=|A_{D'}(X)|&\leq k|X|-k\\ \Rightarrow k-r(X) &\leq d^-_{D'}(X)\\ \end{align*}
So $r$ is a root vector and since $r(V)=k$ Theorem \ref{root} gives that there exist k arc-disjoint out-branchings rooted in $D'$.
\end{proof}

This result implies that every k-regular digraph that contains a solution to Problem \ref{kBT} regardless of $l$ (where no roots are fixed) also contains k arc-disjoint out-branchings, since the solution to Problem \ref{kBT} contains k arc-disjoint spanning trees. So we can decide the problem by either employing Frank's \cite{Frank} algorithm for finding k arc-disjoint out-branchings without prescribed roots, or we employ Edmonds' algorithm \cite{Edmonds2} to find k arc-disjoint spanning trees, locate  the roots as in the proof of Theorem \ref{kBT} and then use Edmonds' branching algorithm \cite{Edmonds} to find k arc-disjoint out-branching with these roots.

\begin{corollary}
Problem \ref{kBT} is polynomially solvable.
\end{corollary}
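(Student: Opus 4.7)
The plan is to reduce Problem \ref{kBT} to the problem of deciding whether $D$ contains $k$ arc-disjoint out-branchings with unrestricted roots, a problem that Frank's algorithm \cite{Frank} solves in polynomial time.

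The central claim is that the yes-instances of Problem \ref{kBT} coincide, for every admissible $l$, with those of the out-branching packing problem on $D$. One direction is essentially free: given $k$ arc-disjoint out-branchings $B_1,\ldots,B_k$ in $D$, each is also a spanning tree of $UG(D)$, so declaring $B_1,\ldots,B_l$ to be the directed trees and $B_{l+1},\ldots,B_k$ the undirected ones yields a solution to Problem \ref{kBT}. The converse is exactly what Theorem \ref{kequal} delivers: any $k$ edge-disjoint spanning trees of $UG(D)$ correspond, by restoring orientations, to $k$ arc-disjoint spanning subdigraphs of $D$ using $k(|V|-1)$ of its $k|V|$ arcs, and the $k$-regularity of $D$ together with Theorem \ref{kequal} then produces $k$ arc-disjoint out-branchings.

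The resulting algorithm is simply to run Frank's procedure on $D$; if it succeeds, output any $l$ of the returned out-branchings as $T_1,\ldots,T_l$ and the remaining $k-l$ (viewed in $UG(D)$) as $T_{l+1},\ldots,T_k$, and otherwise report no solution. The substantive work was carried out in Theorem \ref{kequal}, so for this corollary there is no real obstacle beyond packaging the equivalence together with the known polynomial-time routines of Frank and Edmonds.
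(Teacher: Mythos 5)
Your proposal is correct and follows essentially the same route as the paper: both establish that, for $k$-regular $D$, a solution to Problem \ref{kBT} exists (for any $l$) if and only if $D$ has $k$ arc-disjoint out-branchings, via Theorem \ref{kequal} in one direction and the trivial observation that out-branchings are spanning trees in the other, and then invoke Frank's algorithm. The paper additionally mentions an alternative implementation via Edmonds' matroid partition algorithm plus Edmonds' branching algorithm, but this is only a variant of the same argument.
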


One might be tempted to assume that regularity could be replaced by Eulericity in Theorem \ref{kequal}, but the digraph in Figure \ref{fig:Counter1} shows that the theorem does not hold for Eulerian digraphs.

\begin{figure}[htb]
\centering
\includegraphics[width=0.15\textwidth]{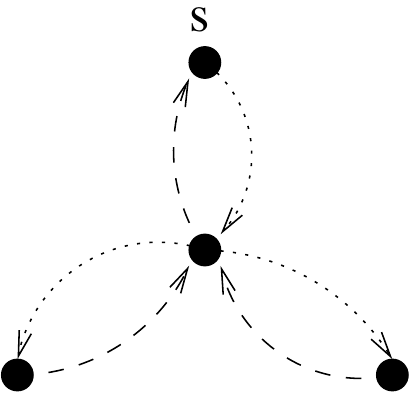}
\caption{The out-branching $B_s^+$ shown with dotted arcs and the spanning tree $T$ shown with dashed. Obviously it is not possible to find 2 arc-disjoint out-branchings in this digraph.}
\label{fig:Counter1}
\end{figure}

If we try to fix the roots,  the k arc-disjoint out-branchings provided by  Theorem \ref{kequal} no longer necessarily constitute a solution as witnessed by the example in Figure \ref{fig:Cond1}. We can find two arc-disjoint out-branchings rooted at $u$ and $v$ respectively, but the indicated partition makes it obvious that the digraph allows for no out-branching rooted at $s$ that leaves a connected subdigraph after removal.

\begin{figure}[htb]
\centering
\includegraphics[width=0.45\textwidth]{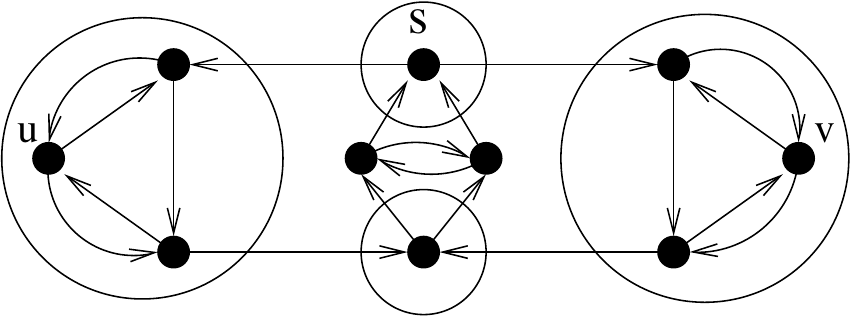}
\caption{Every out-branching rooted at $s$ will always disconnect this digraph upon removal.}
\label{fig:Cond1}
\end{figure}

\section{Related NP-complete problems for 2-regular digraphs} \label{related}

While we saw above that there is a polynomial algorithm for deciding, in a 2-regular digraph $D$, the existence  of an out-branching $B^+_s$ (whose root is not specified) such that $D-A(B^+_s)$ is connected, we could not resolve the question when we insist that $s$ is specified. In what follows we will show that problems which are closely related are indeed NP-complete for 2-regular digraphs.

Let $F$ be the digraph on 3 vertices $\{\alpha,\beta,\gamma\}$ and arcs $\{\alpha\beta, \beta\gamma, \gamma\alpha,\alpha\gamma,\gamma\beta\}$ (so $F$ is obtained from the complete digraph on 3 vertices $\{\alpha,\beta,\gamma\}$ by deleting the arc $\beta\alpha$).

\begin{theorem} \label{allnpc}

The following problems are NP-complete for 2-regular digraphs $D$

\begin{enumerate}

\item[(P1)] Given distinct vertices $s,t$; does $D$ have an $(s,t)$-path $P$ such that
$D-A(P)$ is connected?

\item[(P2)] Given distinct vertices $s,t$; does $D$ have an $(s,t)$-path $P$ such that
$D-A(P)$ is strongly connected?

\item[(P3)] Given distinct vertices $s,t$; does $D$ have an $(s,t)$-path $P$ such that
$D-A(P)$ contains an out-branching rooted at  $s$?

\end{enumerate}

\end{theorem}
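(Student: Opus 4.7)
The plan is to reduce to each of (P1), (P2), (P3) from the NP-complete problem of deciding whether a 2-arc-strong 2-regular digraph contains two arc-disjoint Hamiltonian cycles (Theorem~\ref{hamcycleNP}), following the same blueprint used in Theorems~\ref{hampathNP} and~\ref{NPinout}. Given a 2-arc-strong 2-regular digraph $D$, pick any vertex $a$, split it into $a^-$ and $a^+$ to obtain $D'$, and attach one or two copies of the Cycle Breaker Gadget between $a^-$ and $a^+$ to form a 2-regular digraph $D''$; the designated vertices $s$ and $t$ required by (P1)--(P3) are chosen inside the attached gadget(s).

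In each case, the forward direction is short. Two arc-disjoint Hamiltonian cycles of $D$ correspond to two arc-disjoint Hamiltonian $(a^+,a^-)$-paths of $D'$, and by combining these with appropriate ``fragments'' through the gadgets (analogous to the fragments $P,Q$ of Figures~\ref{fig:BrokenPath} and~\ref{fig:DoublerootCatcher}) one obtains an $(s,t)$-path $P$ in $D''$ together with an arc-disjoint second structure in $D''-A(P)$ that is connected (for (P1)), strongly connected (for (P2)), or contains an out-branching rooted at $s$ (for (P3)).

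The converse is the substantive part. The defining feature of the Cycle Breaker Gadget---that any spanning $(s_G,t_G)$-path of the gadget, when removed, leaves part of the gadget disconnected---together with the placement of $s$ and $t$ must prevent an $(s,t)$-path $P$ satisfying the connectivity requirement from ``short-circuiting'' inside the gadgets. Once this is established, $P$ is forced to enter $D'$ at $a^+$, trace a Hamiltonian $(a^+,a^-)$-path of $D'$, and exit at $a^-$ before reaching $t$. The surviving connectivity/out-branching requirement on $D''-A(P)$ then provides a second arc-disjoint Hamiltonian $(a^+,a^-)$-path in $D'$, and re-identifying $a^+$ with $a^-$ yields two arc-disjoint Hamiltonian cycles in $D$. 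Membership in NP is clear for all three problems since an $(s,t)$-path and each of the three connectivity conditions can be verified in polynomial time.

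The main obstacle is calibrating the gadget layout to each of the three connectivity conditions on $D''-A(P)$. Strong connectivity in (P2) is the most restrictive and a straightforward adaptation of the single-gadget construction of Theorem~\ref{hampathNP} should suffice. For (P3), the demand that $D''-A(P)$ contain $B_s^+$ is weaker, so $s$ must be placed so that reachability from $s$ alone forces $P$ to traverse $D'$; the double-gadget construction of Theorem~\ref{NPinout} (Figure~\ref{fig:DoublerootCatcher}) is the natural candidate. Weak connectivity in (P1) is the most delicate, since the leftover graph can afford considerable redundancy; here additional gadget structure (for instance a second chained Cycle Breaker Gadget, or a small pendant attached to the gadget) will likely be needed to rule out degenerate $(s,t)$-paths that do not visit $D'$ at all.
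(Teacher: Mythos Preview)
Your proposal has a genuine gap that breaks the reduction. In Theorems~\ref{hampathNP} and~\ref{NPinout} the objects being packed are \emph{spanning} (Hamiltonian paths, or branchings which in a 2-regular digraph are forced to be path-like), so each of them must traverse all of $D'$ and in particular restricts to a Hamiltonian $(a^+,a^-)$-path there. In (P1)--(P3) the path $P$ is merely an $(s,t)$-path with no spanning requirement; the Cycle Breaker Gadget can at best force $P$ to leave the gadget and enter $D'$, but nothing in your construction forces $P$ to be Hamiltonian inside $D'$. Once $P$ is allowed to take a short $(a^+,a^-)$-walk through $D'$, the leftover $D'-A(P)$ can easily be connected, strongly connected, or contain an out-branching from $a^+$ without $D$ having two arc-disjoint Hamiltonian cycles.

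Concretely, take $D$ on $\{1,\dots,6\}$ with arcs $12,23,34,45,56,61$ together with $13,35,51,24,46,62$. This digraph is 2-regular and 2-arc-strong, and its only Hamiltonian cycle is $123456$, so it has no pair of arc-disjoint Hamiltonian cycles. Split $a=1$ and take $P$ to be the short path $1^+351^-$ in $D'$. The remainder $D'-A(P)$ still contains the path $1^+2\,3\,4\,5\,6\,1^-$, and in fact every vertex is reachable from $1^+$ and reaches $1^-$; together with any reasonable gadget linking $1^-$ back to $1^+$ this yields a yes-instance for (P1), (P2) and (P3), contradicting the intended equivalence. Thus the converse direction of your reduction fails for all three problems.

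For comparison, the paper does \emph{not} use the Cycle Breaker blueprint here at all. It gives a direct reduction from 3-SAT: a chain of variable gadgets $W[u_i,v_i,p_i,q_i]$ provides the $(s,t)$-path choices, and small strongly connected clause gadgets (copies of a fixed digraph $F$) are hung off the literal vertices. The point is that $P$ cannot enter any copy of $F$ without disconnecting it, so $P$ is confined to the variable chain and encodes a truth assignment; the clause gadgets then certify connectivity, strong connectivity, or an out-branching from $s$ in $D-A(P)$ exactly when the formula is satisfiable. The 3-SAT route sidesteps precisely the issue above, because the structure of the variable chain makes \emph{every} $(s,t)$-path in $D'$ automatically Hamiltonian in that chain.
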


\pf

We will show how to reduce 3-SAT to each of the problems (P1)-(P3)  in polynomial time. We will use almost the same reduction for all three problems. The reduction is similar to one used in \cite{bangTCSta}.

Let $W[u,v,p,q]$ be the digraph (the variable gadget) with vertices $\{u,v,y_1,y_2,\dots{}y_p,z_1,z_2,\ldots z_q\}$ and the arcs of the two (directed) $(u,v)$-paths $uy_1y_2\ldots{}y_pv$, $uz_1z_2\ldots{}z_qv$.

Let ${\cal F}$ be an instance of 3-SAT with variables $x_1,x_2,\ldots{},x_n$  and clauses $C_1,C_2,\ldots{},C_m$. We may assume that each variable $x$ occurs at least once either in the negated form or non-negated in $\cal F$. The ordering of the clauses $C_1,C_2,\ldots{},C_m$ induces an ordering of the occurrences of a variable $x$ and its negation $\bar{x}$ in these. With each variable $x_i$ we associate a copy of $W[u_i,v_i,p_i,q_i]$ where $x_i$ occurs $p_i$ times and $\bar{x}_i$ occurs $q_i$ times  in the clauses of $\cal F$. Identify end vertices of these graphs by setting $v_i=u_{i+1}$ for $i=1,2,\ldots{},{n-1}$. Let $s=u_1$ and $t=v_n$.

Let $D'$ be the digraph obtained in this way. 

For each $i=1,2,\ldots{},m$ we associate the clause $C_i$ with three of the vertices  $V_i=\{a_{i,1},a_{i,2},a_{i,3}\}$ from the digraph $D'$ above as follows:

Assume $C_i$ contains variables $x_j,x_k,x_l$ (negated or not). If $x_j$ is not negated in $C_i$ and this is the $r$'th copy of $x_j$ (in the order of the clauses that use $x_j$), then we identify $a_{i,1}$ with $y_{j,r}$ and if $C_i$ contains $\bar{x}_j$ and this is the $g$'th occurrence of $\bar{x}_j$, then we identify $a_{i,1}$ with $z_{j,g}$. We make similar identifications for $a_{i,2},a_{i,3}$. Thus $D'$ contains all the vertices $a_{j,i}$ for ${j\in [m]}$, ${i\in [3]}$.

The following Claim was proven in \cite{bangTCSta} but we include the easy proof for completeness.

\begin{claim} \label{varpath}

$D'$ contains an $(s,t)$-path $P$ which avoids at least one vertex from $\{a_{j,1},a_{j,2},a_{j,3}\}$ for each $j\in [m]$ if and only if $\cal F$ is satisfiable.

\end{claim}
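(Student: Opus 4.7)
The plan is to exhibit a direct correspondence between $(s,t)$-paths in $D'$ and truth assignments to the variables $x_1,\ldots,x_n$, and then show that the condition ``$P$ avoids at least one vertex from each triple $\{a_{j,1},a_{j,2},a_{j,3}\}$'' translates into ``every clause is satisfied''.

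First I would observe that any directed $(s,t)$-path $P$ in $D'$ must traverse the variable gadgets in order, entering $W[u_i,v_i,p_i,q_i]$ at $u_i$ (which equals $v_{i-1}$ for $i>1$) and leaving at $v_i$. Within such a gadget the only internally vertex-disjoint directed $(u_i,v_i)$-paths are $P_i^+ = u_i y_{i,1}\cdots y_{i,p_i} v_i$ and $P_i^- = u_i z_{i,1}\cdots z_{i,q_i} v_i$, so $P$ must use exactly one of them. This gives a binary choice in each gadget, from which I would define a truth assignment $\tau$ by letting $\tau(x_i)$ be false when $P$ traverses $P_i^+$ and true when $P$ traverses $P_i^-$.

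Under this convention, a vertex $a_{j,k}$ identified with some $y_{i,r}$ (corresponding to a positive occurrence of $x_i$ in $C_j$) is visited by $P$ precisely when $P$ picks $P_i^+$, i.e.\ when $\tau(x_i)$ is false, i.e.\ when the literal represented at $a_{j,k}$ evaluates to false. The symmetric statement holds for vertices identified with some $z_{i,g}$. Hence $P$ avoids $a_{j,k}$ if and only if the literal at $a_{j,k}$ is true under $\tau$. Consequently the hypothesis that $P$ avoids at least one vertex from $\{a_{j,1},a_{j,2},a_{j,3}\}$ for every $j\in[m]$ is exactly the statement that each clause $C_j$ contains at least one true literal, establishing the forward implication.

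For the converse, given a satisfying assignment $\tau$, I would build $P$ by concatenating the paths $P_i^+$ or $P_i^-$ chosen according to $\tau$ (negative if true, positive if false). Because every clause has a true literal, the corresponding $a_{j,k}$ sits on the $(u_i,v_i)$-path that was \emph{not} chosen in gadget $i$, and is therefore avoided by $P$. The proof is essentially bookkeeping; the only potential pitfall is keeping the sign convention between ``positive/negative path'' and ``true/false'' consistent throughout, so I would fix it explicitly at the start and not expect any further combinatorial obstacle.
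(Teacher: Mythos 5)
Your proposal is correct and follows essentially the same argument as the paper's proof: the same bijection between the two $(u_i,v_i)$-subpaths of each gadget and the truth value of $x_i$ (false when the $y$-path is used), and the same converse construction routing the path through the false literals.
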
 

\noindent{}{\bf Proof of the Claim:}

Suppose $P$ is an $(s,t)$-path that avoids at least one vertex from $\{a_{j,1},a_{j,2},a_{j,3}\}$ for each $j\in [m]$. By construction, for each variable $x_i$, $P$ traverses either the subpath $u_iy_{i,1}y_{i,2}\ldots{}y_{i,p_i}v_i$ or the subpath $u_iz_{i,1}z_{i,2}\ldots{}z_{i,q_i}v_i$. Now define a truth assignment by setting $x_i$ false precisely when the first subpath is traversed for $x_i$. This is a satisfying truth assignment for $\cal F$ since for any clause $C_j$ at least one literal is avoided by $P$ and hence becomes true by the assignment (the literals traversed become false and those not traversed become true). Conversely, given a truth assignment for $\cal F$ we can form $P$ by routing it through all the false literals in the chain of variable gadgets. This proves the claim.
\hfill$\diamond$

Let $D$ be the 2-regular digraph we obtain from $D'$ as follows:

For each clause  $C_i$ (to which we associated the vertices $V_i=\{a_{i,1},a_{i,2},a_{i,3}\}$ from the digraph $D'$ above) we add 3 copies $F_{i,1},F_{i,2},F_{i,3}$ of $F$, with the vertices of the $h$th copy denoted by $\{\alpha_{i,h},\beta_{i,h},\gamma_{i,h}\}$ for $h=1,2,3$. and the following arcs $\{a_{i,1} \alpha_{i,1},\beta_{i,1} a_{i,2}, a_{i,2} \alpha_{i,2}, \beta_{i,2} a_{i,3},a_{i,3} \alpha_{i,3},\beta_{i,3}  a_{i,1}\}$. Finally take two further copies $F_1,F_2$ with vertices $\{\alpha_1,\beta_1,\gamma_1\}$ and $\{\alpha_2,\beta_2,\gamma_2\}$ and add the arcs $\{t\alpha_1,t\alpha_2,\beta_1 s,\beta_2 s\}$.

Now it is easy to see that $D$ has an $(s,t)$-path $P$ such that removing the arcs of $P$ leaves a connected digraph if and only if $D'$ contains an $(s,t)$-path which avoids at least one vertex from each of the sets $V_i$, $i=1,2,\ldots{},m$ ($P$ cannot enter any copy $F'$  of $F$ since that would disconnect these vertices from the rest after removing the arcs of $P$). Now the claim implies that (P1) is NP-complete.

To prove that (P2) is NP-complete we just need to show that if $D-A(P)$ is connected for some $(s,t)$-path $P$, then it is also strongly connected.

This follows from the construction since each copy of $F$ is a strongly connected subgraph (on 3 vertices) and since at least one vertex of $V_i$, $i=1,2,\ldots{},m$ is left untouched by $P$ it follows that $D-A(P)$ contains a cycle $C$ through $s,t$ formed by the arcs of $A(D')-A(P)$ and the path $t\alpha_1\gamma_1\beta_1s$ such that $C$ contains at least one vertex from $V_i$ for $i=1,2,\ldots{},m$. Clearly, as $P$ does not enter any copy of $F$, we can attach $F_2$ and all of the copies $F_{i,j}$, $i\in [m],j\in [3]$ to this structure and obtain a strong spanning subdigraph of $D$ which is arc-disjoint from $P$. 

Finally, to prove that (P3) is NP-complete, we just need to observe that if $P$ is any $(s,t)$-path in $UG(D)$ such that $D-A(P)$ is connected then $P$ must be a directed $(s,t)$-path in $D'$ and then, as we saw above, $D-A(P)$ is strongly connected and thus contains an out-branching from $s$. Clearly, if $D-A(P)$ contains an out-branching $B^+_s$, then the $(s,t)$-path contained in $B^+_s$ must meet each of the sets $V_i$ for $i=1,2,\ldots{},m$, implying that $P$ avoids at least one vertex from each $V_i$ for $i=1,2,\ldots{},m$ so, by the claim,  $D$ is a yes instance for (P3) if and only if $\cal F$ is satisfiable.
\qed

Notice that the only viable undirected $(s,t)$-path in the proof above is still a directed $(s,t)$-path in $D'$, so the problem is also NPC for undirected $(s,t)$-paths. Similarly if we delete $F_1$ from the graph and insert the arc $ts$ instead we get a proof that the problems are NPC for directed cycles containing $s$ instead of $(s,t)$-paths.

\section{Concluding remarks}

We did not completely settle Problem \ref{oBsT} for 2-regular digraphs since we had to leave open the case when the vertex $u$ is fixed in advance:

\begin{problem} \label{ufixed2reg}

What is the complexity of Problem \ref{oBsT} when $D$ is 2-regular and the vertex $u$ is part of the input?

\end{problem}

The proof in \cite{bangTCSta} that Problem \ref{oBsT} is NP-complete for general digraphs involves constructing a digraph which is not the union of two arc-disjoint spanning trees. It seems difficult to modify that proof so that the digraph used is the union of two arc-disjoint spanning trees.

\begin{problem} \label{2n-2branchtree}

What is the complexity of Problem \ref{oBsT} when the input digraph $D=(V,A)$ is the union of two arc-disjoint spanning trees?

\end{problem}

Note that, just as Corollary \ref{useall} followed directly from Theorem \ref{NPinout}, we again have that if Problem \ref{ufixed2reg} is NP-complete, then so is Problem \ref{2n-2branchtree} but it may be the case that Problem \ref{2n-2branchtree} is NP-complete while Problem \ref{ufixed2reg} is still polynomially solvable.

The following result from \cite{bangTCSta} indicates  that Problem  \ref{ufixed2reg} could be NP-complete. Note that here the digraph $H$ has either $n$ or $n+1$ arcs since we need $n-1$ for the connected remainder.

\begin{theorem}\cite{bangTCSta}

It is NP-complete to decide whether a strongly connected 2-regular digraph $D$ contains a spanning strong subdigraph $H$ so that $D-A(H)$ is connected.

\end{theorem}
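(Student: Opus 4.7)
The plan is to reduce from 3-SAT along the same general lines as Theorem \ref{allnpc}. Given a formula $\mathcal{F}$, I would chain variable gadgets $W[u_i,v_i,p_i,q_i]$ (with $v_i = u_{i+1}$) and identify the $y$ and $z$ vertices with clause vertices $a_{i,j}$. To close this backbone into a 2-regular strongly connected digraph $D$, I would attach small strongly connected gadgets at each clause and at the pair $(s,t) = (u_1, v_n)$. The $F$-copy used in Theorem \ref{allnpc}(P2) cannot be used verbatim: because each $F$-copy has only one arc entering and one leaving the rest of $D$, any strong spanning $H$ would be forced to contain both of these external arcs, which would isolate the $F$-vertices in $D-A(H)$. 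The gadget must therefore be designed with at least two external in-arcs and two external out-arcs at the attachment points, so that $H$ can use exactly one of each while the complement retains a connecting route through the gadget.

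With such a gadget in place, the intended correspondence is: a satisfying assignment of $\mathcal{F}$ yields an $(s,t)$-path $P$ through the variable-gadget backbone that avoids one literal per clause, and setting $H := D - A(P)$ (together with the canonical strong structure on each gadget copy) produces a spanning strong subdigraph whose complement is essentially $A(P)$, hence connected. Conversely, any spanning strong $H$ with $D-A(H)$ connected should restrict on each gadget copy to this canonical pattern and should traverse each variable gadget along a single side; the complement is then forced to be an $(s,t)$-path that avoids at least one vertex of every $V_i = \{a_{i,1}, a_{i,2}, a_{i,3}\}$, and the Claim from the proof of Theorem \ref{allnpc} converts this into a satisfying truth assignment for $\mathcal{F}$. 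Membership in NP is immediate since a candidate $H$ can be verified in polynomial time.

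The main obstacle is the design of the gadget together with the accompanying structural lemma that controls $H$'s behavior on each copy. Both conditions, $H$ strong spanning and $D - A(H)$ connected, are extremely tight in the 2-regular setting: the count $|A(H)| \in \{|V(D)|, |V(D)|+1\}$ leaves essentially no slack. Exotic $H$s that trade some gadget-internal arcs for economies elsewhere must be ruled out with care, and I expect to spend most of the proof verifying that the combined local degree constraints at each gadget attachment and the global arc budget jointly force $H$ into the canonical pattern on every gadget copy.
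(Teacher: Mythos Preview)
The paper does not prove this theorem; it is quoted from \cite{bangTCSta} in the concluding remarks without proof, so there is no argument in the present paper against which to compare your sketch.

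On the sketch itself, there is a real inconsistency in the forward (satisfiable $\Rightarrow$ yes-instance) direction. You propose to take the $(s,t)$-path $P$ through the backbone determined by a satisfying assignment and set $H := D - A(P)$, so that $D - A(H)$ is ``essentially $A(P)$, hence connected.'' But $P$ lives entirely in the chain of variable gadgets and does not visit any clause-gadget vertex; consequently $D - A(H)$, whose vertex set is all of $V(D)$, would leave every clause-gadget vertex isolated and would fail to be connected. You appear to sense this, since you argue that each clause gadget must be redesigned to have two external in-arcs and two external out-arcs so that ``$H$ can use exactly one of each while the complement retains a connecting route through the gadget.'' That, however, contradicts $H = D - A(P)$: if the complement is to use one external in-arc, one external out-arc, and some internal gadget arcs, then the complement is strictly larger than $A(P)$ and $H$ is strictly smaller than $D - A(P)$.

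The arc count makes this unavoidable rather than cosmetic. In a $2$-regular digraph on $n$ vertices a spanning strong $H$ has $|A(H)|\in\{n,n+1\}$, so $D-A(H)$ has $n-1$ or $n$ arcs and must be, up to one extra arc, a spanning tree; in particular it must reach every gadget vertex. Your gadget therefore has to admit a fixed bipartition of its arcs into a ``strong piece'' for $H$ and a ``tree piece'' for $D-A(H)$, each attached to the rest of $D$ through its own external arcs, and you also need the converse lemma forcing every feasible $H$ to respect this bipartition on every copy. Until such a gadget is actually exhibited and that lemma proved, the reduction is incomplete. You have correctly identified the obstacle; you have not yet overcome it.
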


\bibliographystyle{abbrv}
\bibliography{bibfile,refs}

\end{document}